\documentclass[12pt]{amsart}
\usepackage{amssymb}
\usepackage{enumitem}
\usepackage{graphicx}
\usepackage{verbatim}
\usepackage{epstopdf}
\usepackage{color}
\usepackage{amsaddr}
\usepackage[pagewise]{lineno}
\theoremstyle{plain}
\newtheorem{theorem}{Theorem}[section]
\newtheorem{proposition}[theorem]{Proposition}

\newtheorem{lemma}[theorem]{Lemma}

\newtheorem{thm}[theorem]{Theorem}
\theoremstyle{definition}
\newtheorem{definition}[theorem]{Definition}

\theoremstyle{remark}

\newtheorem{remark}[theorem]{Remark}
\numberwithin{equation}{section}

\author{Grigorii Dvorkin}
\address{Department of Mathematics \\ Pennsylvania State University \\ University Park, PA 16802, USA}
\email{gdd5124@psu.edu}

\subjclass[2020]{37C40, 37D25, 37D30}
\keywords{SRB measures, effective hyperbolicity, effectively hyperbolic times, admissible manifolds, standard pairs.}
\title[A push-forward construction of SRB measures]{A generalized push-forward construction of Sinai-Ruelle-Bowen measures}

\begin{document}

\begin{abstract}
For Anosov diffeomorphisms the geometric approach for constructing Sinai-Ruelle-Bowen (SRB) measures works by pushing forward under the dynamics the leaf volume on a local unstable manifold or any measure that is equivalent to the leaf volume. Such a measure is called a reference measure. In applications one often needs to use a reference measure that is absolutely continuous with respect to leaf volume. In this paper we show that a reference measure can be chosen from a substantially more general class of measures which are the restriction of the leaf volume to any measurable subset of a local unstable manifold of positive leaf volume. Moreover, we do this in a more general settings of diffeomorphisms which are effectively hyperbolic on a set of positive volume. Such diffeomorphisms were introduced by Climenhaga, Dolgopyat and Pesin in \cite{CDP} where they used the geometric approach to construct SRB measures. Our results applies also to diffeomorphisms that are uniformly partially hyperbolic or more generally possess a dominated splitting.
\end{abstract}

\maketitle

\section{Introduction}
For dissipative (volume contracting) hyperbolic dynamical systems Sinai-Ruelle-Bowen (SRB) measures are the closest analog of smooth invariant measures in the conservative (volume preserving) case -- both describe the asymptotic behavior of trajectories that are generic with respect to the Riemannian volume. In particular, SRB measures can be obtained as the result of the evolution of volume under the dynamics that is the push forward (or backward) of the volume by the dynamics. For Anosov diffeomorphisms this was first observed by Sinai \cite{Sin}. To prove this he used symbolic representations of Anosov maps by subshifts of finite type. Later Pesin and Sinai \cite{PS} developed a more straightforward approach to prove convergence of push forwards to the SRB measure and in fact, used this approach to construct the more general class of $u$-measures for systems that are uniformly partially hyperbolic. 

Let us briefly describe the push forward approach in the simplest case of an Anosov diffeomorphism $f$ of a compact smooth manifold $M$. Denote by $m$ the Riemannian volume on $M$. The sequence of measures
\begin{equation}\label{eq:1}
\mu_n=\frac{1}{n}\sum_{k=0}^{n-1}f_*^k m
\end{equation}  
is called the \emph{evolution} of the volume $m$ or the \emph{push forward} of $m$.

One can show that if $f$ is topologically transitive, then the sequence $\mu_n$ converges in the weak$^*$-topology to a measure $\mu$ which is the desired SRB measure.

An alternative approach is to consider a local unstable manifold $V^u(x)$ for $f$ at a point $x\in M$. Since $V^u(x)$ is smooth, the Riemannian metric on $M$ induces a Riemannian metric on 
$V^u(x)$, and we denote by $m^u_x$ the corresponding volume on $V^u(x)$ which we call the \emph{leaf-volume}. Extending the leaf-volume to the whole manifold $M$, one can show that under the assumption of topological transitivity of $f$ the push forward of $m^u_x$ that is the sequence of measures 
\begin{equation}\label{eq:2}
\mu_n=\frac{1}{n}\sum_{k=0}^{n-1}f_*^k m^u_x
\end{equation} 
converges to the SRB measure for $f$.

Constructing SRB measure for dynamical systems that are \emph{non-uniformly} hyperbolic is a challenging problem. 
In \cite{CDP} Climenhaga, Dolgopyat and Pesin obtain one of the most general results addressing this problem. Namely, under the assumption that $f$ is \emph{effectively hyperbolic} on a forward invariant set of positive volume they showed that any ergodic limit measure of the sequence of measures given by \eqref{eq:1} or by \eqref{eq:2} is an SRB measure. We stress that  effective hyperbolicity is weaker than uniform hyperbolicity but is stronger than non-uniform hyperbolicity. In the next section we will give a formal definition of effective hyperbolicity.

In his ICM address Viana (see \cite{Via}) conjectured that a diffeomorphism that has non-zero Lyapunov exponents almost everywhere with respect to volume (and hence, is non-uniformly hyperbolic on $M$) possesses an SRB measure. Burguet \cite{Bur} obtained an affirmative solution of this conjecture for $C^r$ surface diffeomorphisms under an additional assumption 
$$
m\Bigl\{x\in M:\lambda(x)>\frac{R(f)}{r}\Bigr\}>0,
$$
where $\lambda(x)$ is the positive Lyapunov exponent at $x$ and 
$$
R(f)=\lim_n \frac{1}{n}\log^{+}\sup_{x\in M}||d_xf^n||
$$  
is a constant which depends only on $f$.

In particular, this implies that the conjecture holds true for any $C^\infty$ surface diffeomorphism. In a recent work \cite{SB}, Ben Ovadia and Burguet claimed that the conjecture is true for $C^\infty$ diffeomorphisms in any dimension. 

We also mention a work of Ben Ovadia \cite{Snir} in which he showed that Viana's conjecture holds true under a different additional assumption of \emph{positive recurrence}. Note that in this paper Ben Ovadia did not use the push-forward approach but a representation of the system by a countable-state Markov shift to prove existence of SRB measures.  

The goal of this paper is to describe a more general version of the push forward approach to SRB measures. To explain the idea we consider again the simplest case of an Anosov diffeomorphism $f$ of a compact smooth Riemannian manifold $M$. Let $\nu$ be a smooth measure on $M$ that is the measure for which $d\nu(x)=\rho(x)d\mu(x)$ where $\rho(x)$ is a \emph{strictly positive} continuous function on $M$. It is easy to show that if in the definition of the sequence \eqref{eq:1} the volume $m$ is replaced with the measure $\nu$, than the corresponding sequence of measures (which we denote by $\nu_n$) converges to the SRB measure for $f$.\footnote{Again under the assumption that $f$ is topologically transitive.} Similarly, we can replace the leaf-volume in \eqref{eq:2} with any smooth measure $\nu^u$ on $V^u(x)$. 

The situation becomes less obvious if we allow the measure $\nu$ to be only absolutely continuous with respect to volume on $M$ or respectively, the measure $\nu^u$ to be only absolutely continuous with respect to the leaf-volume on $V^u(x)$. This situation arrises in applications when one often needs to deal with measures that are absolutely continuous rather than equivalent to volume. 

In this paper we deal with even more general situation, namely, we  show that one can start with \emph{any} set $A$ of positive volume or respectively, with \emph{any} set $A^u$ of positive leaf-volume and obtain an SRB measure by pushing the measure, which is the restriction of volume to the set $A$ or respectively, the restriction of the leaf-volume to the set $A^u$, forward under the dynamics. Moreover, we show that this result holds in the general setting of effectively hyperbolic diffeomorphisms as well as diffeomorphisms that are uniformly partially hyperbolic or more generally possess a dominated splitting.

We conclude by pointing out that similarly to hyperbolic smooth measures hyperbolic SRB measures possess strong ergodic properties, in particular they are Bernoulli (up to a rotation) and satisfy the entropy formula. 

{\bf Acknowledgement.} The author was partially supported by NSF grant DMS-2153053.

\section{Effective hyperbolicity}

\subsection{Cone families} Consider a $C^{1+\alpha}$ diffeomorphism $f$ of a compact smooth Riemannian $d$-dimensional manifold $M$. Let $L\subseteq M$ be a forward invariant subset of positive volume. Given $x\in L$, a subspace $E(x) \subset T_x M$, and a number $\theta(x) > 0$, we denote by $K(x)=K(x,E(x),\theta(x))$ the cone at $x$ around $E(x)$ with angle $\theta(x)$. We call $E(x)$ the central subspace of the cone. If $E$ is a measurable distribution on $L$ and the angle function $\theta$ is measurable, then we obtain a measurable cone family on $L$.

Throughout the paper we assume that there exists a forward invariant Borel set $L\subset U$ of positive volume with two measurable cone families $K^s(x)=K^s(x,E^s(x),\theta^s(x))$ and 
$K^u(x)=K^u(x,E^u(x),\theta^u(x))$ such that\footnote{Note that the cone central subspaces $E^s(x)$ and $E^u(x)$ need not be invariant under $df$.}
\begin{enumerate}
\item they are invariant, that is for all $x\in L$
$$
\overline{Df(K^u(x))}\subset K^u(f(x));
$$ 
and for all $x\in f(A)$
$$
\overline{Df^{-1}(K^s(f(x)))}\subset K^s(x);
$$ 
\item they are transversal, that is $T_x M=E^s(x)\oplus E^u(x)$; we assume that $d_s = \dim E^s(x)$ and $d_u = \dim E^u(x)$ do not depend on $x$.
\end{enumerate}

\subsection{Definition of effective hyperbolicity} Define the measurable functions 
$\lambda^u, \lambda^s\colon L\to\mathbb{R}$ by \begin{equation}\label{eqn:luls}
\begin{aligned}
\lambda^u(x) &= \inf \{\log \|Df(v)\| \mid v\in K^u(x), \|v\|=1 \}, \\
\lambda^s(x) &= \sup \{\log \|Df(v)\| \mid v\in K^s(x), \|v\|=1\}.
\end{aligned}
\end{equation}
We call the number 
\begin{equation}\label{eqn:defect}
\Delta(x) = \tfrac 1\alpha \max(0,\ \lambda^s(x) - \lambda^u(x)),
\end{equation}
the defect from domination. Finally, we set  
\begin{equation}\label{eqn:le}
\lambda(x) = \min ( \lambda^u(x) - \Delta(x), -\lambda^s(x))
\end{equation}
and 
\begin{equation}\label{eqn:thetax}
\theta(x) = \inf\{\measuredangle(v,w) \mid v\in K^u(x), w\in K^s(x) \}.
\end{equation}

We say that a point $x\in L$ is \emph{effectively hyperbolic} if 
\begin{enumerate}
\item $\lim\inf_{n\to\infty}\frac1n\sum_{k=0}^{n-1} 
\lambda(f^k x) > 0$,
\item $\lim_{\bar\theta\to 0} \bar\delta\{n \mid \theta(f^n(x)) < \bar\theta\} = 0$,
\end{enumerate}
where 
$$
\bar\delta(\Gamma)=\limsup_{n\to\infty}
\frac{|\Gamma \cap\{1,2,3,...,n\}|}{n}
$$ 
is the upper asymptotic density of the set 
$\Gamma\subset\mathbb{N}$.

We denote by $\mathcal{E}$ the set of all effectively hyperbolic points in $M$.

\subsection{Existence of SRB measures} In \cite{CDP}, Climenhaga, Dolgopyat, and Pesin showed existence of SRB measures for effectively hyperbolic systems. More, precisely, they proved the following two statements.
\begin{proposition}
Let $f$ be a $C^{1+\alpha}$ diffeomorphism of a compact smooth Riemannian manifold $M$ which is effectively hyperbolic on a forward invariant set $L$.  Assume that $m(\mathcal{E})>0$ where $m$ is the Riemannian volume in $M$. Then $f$ has an SRB measure.
\end{proposition}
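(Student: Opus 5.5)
We follow the geometric push-forward approach of \cite{CDP}. Since $m(\mathcal{E})>0$, we first choose a point $x\in\mathcal{E}$ that is a Lebesgue density point of $\mathcal{E}$, and a local smooth manifold $W$ through $x$ of dimension $d_u$ tangent to the cone family $K^u$, with $m_W(W\cap\mathcal{E})>0$. This is possible by a Fubini-type argument: we foliate a neighborhood of $x$ by parallel $d_u$-dimensional disks tangent to $K^u$. We then consider
\[
\mu_n=\frac1n\sum_{k=0}^{n-1}f^k_*m_W ,
\]
where $m_W$ is the leaf volume on $W$. We will show that some weak$^*$ limit point of $(\mu_n)$ has an ergodic component that is an SRB measure.

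The key notion is that of \emph{effective hyperbolic times}. For $y\in W\cap\mathcal{E}$, condition (1) of effective hyperbolicity together with a Pliss-lemma argument gives a set of times $n$ of positive lower density such that
\[
\sum_{j=k}^{n-1}\lambda(f^jy)\ge \chi(n-k)
\]
for all $0\le k<n$, with some $\chi>0$. Condition (2) lets us further restrict to times at which $\theta(f^n y)\ge\bar\theta$, still keeping positive density. At such times, the invariance of $K^u$, the expansion bound $\lambda^u$, and the correction by the defect $\Delta$ let us control the H\"older curvature of $Df^n(TW)$. The $\tfrac1\alpha$ factor in $\Delta$ is exactly what compensates the loss of regularity under non-dominated iteration. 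The outcome is that $f^n(W)$ contains a uniformly sized admissible $u$-manifold around $f^ny$: a graph over a disk in $E^u$ of radius $r>0$, with uniformly bounded H\"older-continuous derivative. In addition, the density of $f^n_*m_W$ on this piece is uniformly log-H\"older, by bounded distortion coming from backward contraction along the piece.

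From this we get a \emph{uniform mass lemma}. By Fubini over the times $k$, a fixed positive proportion of the mass of $\mu_n$, bounded below independently of $n$, is carried by a collection of standard pairs (admissible manifold, regular density) that lies in a compact family $\mathcal{K}$. The remainder we simply discard. Passing to a subsequence, the good parts converge to a measure $\mu^{\mathcal{K}}$ of positive mass. This measure is a combination of measures on admissible manifolds with densities of controlled regularity, hence it has absolutely continuous conditionals on a measurable family of local unstable-like disks. Next we check that any limit measure $\mu$ is $f$-invariant; this is standard for Ces\`aro averages. We also check that $\mu\ge\mu^{\mathcal K}$. The latter requires the compactness of $\mathcal K$ in the $C^1$ topology on graphs, which follows from the uniform H\"older bounds.

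Finally we pass from $\mu^{\mathcal K}$ to an SRB measure. The limit disks are backward contracted, because backward contraction persists under limits on the compact family, so they are genuine local unstable manifolds for $\mu$-typical points. Therefore $\mu$ has a component with absolutely continuous conditionals on unstable manifolds and positive exponents in the $E^u$ direction. Some ergodic component of $\mu$ thus gives positive weight to this part. By a standard argument, that component is hyperbolic with absolutely continuous conditionals, hence an SRB measure. We expect the main obstacle to be the uniform mass and regularity step: the curvature of $f^n(W)$ must be controlled at effective hyperbolic times without domination. This requires a careful graph-transform estimate in which the defect $\Delta$ precisely absorbs the H\"older distortion.
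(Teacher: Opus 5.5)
Your route is the paper's: reduce Proposition 2.1 to Proposition 2.2 by a Fubini argument, then push $m_W$ forward as in \cite{CDP}, build standard pairs at effective hyperbolic times, and extract a limit $\mu\ge\mu^0=\Phi^*(\nu)$. The genuine gap is in the final step. The only dynamical property your compact family $\mathcal{K}$ carries through the weak$^*$ limit is backward contraction along the disks. At Oseledets-regular points of the limit disks, this gives $d_u$ exponents $\ge\overline{\lambda}$ along $T_yW$, but it says nothing about the remaining $d_s$ directions. You cannot recover that from effective hyperbolicity of the initial points. The measure $\mu$ need not be concentrated on $L$ (let alone on $\mathcal{E}$), where $K^s$ and $\lambda^s$ are defined. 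Moreover, these objects are only measurable, so nothing about them survives a weak$^*$ limit. If the ergodic component had further non-negative exponents, it would not be hyperbolic. If those exponents were positive, your $d_u$-disks would be proper subdisks of its unstable manifolds, and absolute continuity on such subdisks does not give the SRB property (compare $u$-measures in the partially hyperbolic setting). So the sentence ``by a standard argument, that component is hyperbolic'' has nothing to stand on.

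The missing ingredient is the condition $m_W(H_{\mathbb{J},N}(W))\ge b$ built into $\mathcal{R}_{K,L,N}$. Each pair must contain a set of leaf measure at least $b$ of points $y$ admitting a complement $G$, with $T_yM=T_yW\oplus G$, that has been exponentially contracted by $Df^j$ along the last $j\le N$ iterates.
\begin{itemize}
\item It comes from the $-\lambda^s$ part of $\lambda=\min(\lambda^u-\Delta,-\lambda^s)$ at effective hyperbolic times; your Pliss argument already provides this, but you never use it.
\item It is a closed condition, so it passes to $\mathcal{R}_{K,L}=\bigcap_N\mathcal{R}_{K,L,N}$.
\item Combined with backward contraction along $W$, it forces exactly $d_u$ exponents $\ge\overline{\lambda}$ and $d_s$ exponents $\le-\overline{\lambda}$ on a set of positive $\mu^0$-measure.
\end{itemize}

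Two smaller points. First, since $K^u$ is only measurable, your disks cannot be tangent to $K^u$ everywhere. Use Lusin's theorem to get a compact $\Lambda\subset\mathcal{E}$ of positive volume on which $E^u$ and $\theta^u$ are continuous, with $\theta^u$ bounded below. Then take disks parallel to $E^u(x)$ near a density point $x$ of $\Lambda$; this yields exactly the hypothesis of Proposition 2.2. Second, your uniform mass lemma needs a bounded-multiplicity (Besicovitch-type) cover of the hyperbolic-time set by Bowen balls, as in the paper. Without it, you cannot turn the overlapping pieces $f^n(B_n(y,r))$ into a measure on standard pairs whose total mass is bounded above and below.
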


To state the second result consider a local submanifold 
$W\subset M$ of dimension $d_u$. We say that $W$ is \emph{$u$-admissible} if there is 
$x\in W\cap\mathcal{E}$ such that $T_xW\subset K^u(x)$ and $W$ has a small size $r$; we give more details in the next section. We denote by $m_W$ the \emph{leaf volume} on $W$ that is the Riemannian volume on $W$ generated by the Riemannian structure on $W$ induced by the Riemannian structure on $M$.

\begin{proposition}
Let $f$ be a $C^{1+\alpha}$ diffeomorphism of a compact smooth Riemannian manifold $M$. Assume that 
\begin{enumerate}
\item $f$ is effectively hyperbolic on a forward invariant set $L$;
\item there is a $u$-admissible submanifold $W\subset M$ such that 
$$
m_W(\{x\in\mathcal{E}\cap W: T_xW\subset K^u(x)\}) > 0.
$$
\end{enumerate}
Then $f$ has an SRB measure.
\end{proposition}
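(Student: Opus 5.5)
The plan is to follow the geometric push-forward scheme of \cite{CDP}. Write $S=\{x\in\mathcal{E}\cap W: T_xW\subset K^u(x)\}$, so $m_W(S)>0$, and consider the Ces\`aro averages
$$
\mu_n=\frac1n\sum_{k=0}^{n-1}f^k_*m_W .
$$
Compactness of $M$ gives a weak$^*$ limit point $\mu$, and $\mu$ is $f$-invariant by the usual telescoping argument. The goal is to show that a nontrivial part of $\mu$ has absolutely continuous conditional measures on unstable leaves, and that some ergodic component of it is the required SRB measure.

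\textbf{Step 1: effectively hyperbolic times.} For $x\in S$, condition (1) of effective hyperbolicity, combined with a Pliss-type lemma, gives a set of times $n$ of positive lower density at which every tail average satisfies
$$
\tfrac1{n-k}\sum_{j=k}^{n-1}\lambda(f^jx)\ge \bar\chi>0 \quad\text{for all } 0\le k<n .
$$
Condition (2) lets us discard a set of small density, so that at the remaining times we also have $\theta(f^nx)\ge\bar\theta$. The term $\Delta$ in the definition of $\lambda$ is built in so that, at such times, two things happen. Graph transforms of the tangent cone of $W$ stay inside $K^u$. The $\alpha$-Hölder distortion of $Df$ along the image piece is controlled, because the loss of domination is paid for by $\Delta$. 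This step is essentially the core of \cite{CDP}, and I would import it.

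\textbf{Step 2: admissible pieces with bounded distortion.} The consequence of Step 1 is that at an effectively hyperbolic time $n$ for $x$, the image $f^n(W)$ contains an admissible $u$-manifold $V$ of fixed size $r$ around $f^nx$. Moreover, the density of $f^n_*m_W$ restricted to $V$ with respect to $m_V$ has uniformly bounded $\log$-Hölder constants. I would then define $\mathcal{P}$ as the space of standard pairs $(V,\rho)$: $V$ is an admissible manifold of size $r$ with $\theta\ge\bar\theta$ and uniformly Hölder graph, and $\rho$ has bounded log-Hölder norm. Since $V$ is a graph over a ball with bounded Lipschitz data, $\mathcal{P}$ is compact.

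\textbf{Step 3: a uniform positive mass in standard pairs.} Using a Vitali/Besicovitch covering argument on the images, I would decompose $f^k_*m_W=\nu_k+\eta_k$. Here $\nu_k$ is an integral over $\mathcal{P}$ of measures $\rho\,m_V$. The Fubini argument in Step 1 gives positive density of good times for $m_W$-a.e.\ $x\in S$, so
$$
\liminf_n \tfrac1n\sum_{k<n}\|\nu_k\| \ \ge\ c\,m_W(S)>0 .
$$
This lower bound is the main obstacle. The good times depend on $x$, and one must ensure that the images at good times cover a definite fraction of the mass without overcounting. I would first try the averaged Pliss lemma combined with a Fubini exchange of $\sum_k$ and $\int dm_W$, as in \cite{CDP}.

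\textbf{Step 4: passing to the limit.} Passing to a subsequence, the averages of $\nu_k$ converge to some $\nu\le\mu$ with $\nu(M)>0$. By compactness of $\mathcal{P}$, $\nu$ is an integral of measures $\rho\,m_V$ over limiting standard pairs, so its conditionals on a measurable partition by admissible leaves are absolutely continuous. The remaining task is to transfer this to an ergodic component:

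\begin{enumerate}
\item Show that the limiting leaves $V$ lie inside local unstable manifolds. This holds because backward contraction along them is uniform, with rate $\bar\chi$.
\item The absolutely continuous part is invariant, so choose an ergodic component charged by $\nu$.
\item For that component, use absolute continuity of the leaves to get absolutely continuous conditionals on unstable manifolds, and positive exponents in $K^u$.
\end{enumerate}

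That ergodic component is an SRB measure.
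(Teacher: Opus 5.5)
Your outline follows the same route as the paper and \cite{CDP}: push forward $m_W$, effectively hyperbolic times, Besicovitch covering of Bowen balls, a compact space of standard pairs, and a lift $\nu$ with $\Phi^*(\nu)\le\mu$ and mass bounded below. However, Step 4 has a genuine gap. Uniform backward contraction along the limiting leaves $V$ only shows that $V$ lies in the unstable set of its points. It does not show that $V$ is an open piece of a $d_u$-dimensional unstable manifold. If the ergodic component you pick has more than $d_u$ positive exponents, absolutely continuous conditionals along the $d_u$-dimensional leaves $V$ say nothing about conditionals on the full unstable manifolds, and the component is not SRB.

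Your limit argument uses only $\lambda^u-\Delta$. The $-\lambda^s$ part of $\lambda$ enters only when you select good times, and nothing about it is recorded in the standard pairs, so it is lost in the limit. Here is an example showing this matters. Take $f=A\times B$ on $\mathbb{T}^4$, where $A$ and $B$ are hyperbolic automorphisms with eigenvalues $10,\frac1{10}$ and $2,\frac12$. Let $K^u$ be a thin cone around the $10$-direction, and let $W$ be a segment in that direction through $(a,0)$. Then $\lambda^u-\Delta\approx\log 10$ everywhere. With times defined via $\lambda^u-\Delta$ (as in the paper's definition of effectively hyperbolic times), your Steps 2--4(b) all go through. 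The limit $\mathrm{Leb}_{\mathbb{T}^2}\times\delta_0$ is ergodic and hyperbolic, and has absolutely continuous conditionals on leaves with uniform backward contraction. Yet it is not SRB, because its unstable manifolds are $2$-dimensional. Only the term $-\lambda^s\le-\log 2<0$ excludes this example from the proposition.

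The missing ingredient is the second condition defining $\mathcal{R}_{K,L,N}$, namely $m_W(H_{\mathbb{J},N}(W))\ge b$. It says that on a set of leaf volume at least $b$, $T_yW$ has a complement $G$ that is uniformly expanded by $f^{-j}$ for $0\le j\le N$. You would need to do three things:
\begin{enumerate}
\item show that the pairs produced at a good time $n$ satisfy this with $N=n$; this is where forward contraction in $K^s$ and the angle bound are used (the paper cites Proposition 4.8 of \cite{CDP} for this);
\item show that the condition is closed;
\item discard the times $j<q_n$, so that the limit $\nu$ is supported on $\bigcap_N\mathcal{R}_{K,L,N}$.
\end{enumerate}
Only on that set are the leaves genuine local unstable manifolds. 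At regular points of $H(W)$ there are exactly $d_u$ positive and $d_s$ negative exponents, and $\Phi^*(\nu)$ charges such points because your densities are bounded below.

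A smaller point: compactness of $\mathcal{P}$ and continuity of $\Phi^*$ require uniform $C^{1+\alpha}$ bounds on the graphs, which give $C^1$ compactness. Lipschitz bounds give only $C^0$ limits, and under $C^0$ convergence the leaf volumes $m_V$ need not converge.
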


\section{The Main Theorem} 

One can derive Proposition 2.1 from Proposition 2.2. The proof of the latter can be obtained by pushing the leaf-volume $m_W$ forward by the dynamics. More precisely, we first extend the measure $m_W$ to the whole manifold $M$ and then consider the sequence of measures 
$$
\mu_n=\frac{1}{n}\sum_{k=0}^{n-1} f_*^k m_W
$$ 
(compare to \eqref{eq:2}). It is easy to show that this sequence of measures is compact in the weak$^*$-topology and it is proved in \cite{CDP} that any ergodic component of any limit measure for this sequence is the desired SRB measure. 

In this paper we consider a more general version of the push-forward approach. To this end for a point $x\in\mathcal{E}$ and a number $r>0$ choose a function 
\begin{equation}\label{eq:psi}
\psi\in C^{1+\alpha}(B^s(x,r), E^u(x)),
\end{equation} 
where $B^s(x,r)$ is the ball in $E^s(x)$ of radius $r$. We call the manifold
$$
W=\exp_x\{(u,\psi(u)): u\in B^s(x,r)\}
$$ 
a \emph{$u$-admissible local manifold at the point $x$ of size $r$}.

To state our main result let $W$ be as in Proposition 2.2, $D\subset\mathcal{E}\cap W$ be a subset of positive leaf volume that is $m_W(D)>0$\footnote{We stress that we allow the case when $D=\mathcal{E}\cap W$.} and let $\mu_D$ be the measure on $D$ that is the restriction of the leaf-volume $m_W$ to $D$; more precisely, for every measurable set $E\subset D$ 
$$
\mu_D(E)=\frac{m_W(E\cap D)}{m_W(D)}.
$$ 
Consider the sequence of measures 
\begin{equation}\label{eq:111}
\mu_n=\frac1n\sum_{k=0}^{n-1} f_*^k\mu_D.
\end{equation}
\begin{theorem}\label{main-thm}
Every limit measure $\mu$ of the sequence $\mu_n$ has an ergodic component that is an SRB measure.
 \end{theorem}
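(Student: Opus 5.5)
We follow the scheme of \cite{CDP}, replacing the leaf volume $m_W$ by $\mu_D$ throughout; the new input is a density argument that keeps a definite proportion of the mass of $\mu_D$ on ``good'' pieces. Step one is the CDP machinery of effectively hyperbolic times. For $x\in\mathcal{E}$ and constants $\gamma,\bar\theta$, a time $n$ is \emph{effectively hyperbolic} for $x$ if $\sum_{k=j}^{n-1}\lambda(f^kx)\ge\gamma(n-j)$ for all $0\le j<n$ and $\theta(f^nx)\ge\bar\theta$. By the Pliss lemma and condition (2) of effective hyperbolicity, such times have positive lower density: there are $\gamma,\bar\theta,\delta>0$ and a set $D_0\subset D$ with $m_W(D_0)>\frac12 m_W(D)$ such that every $x\in D_0$ has, for all large $n$, at least $\delta n$ effectively hyperbolic times in $[0,n)$. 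At such a time $n$, \cite{CDP} gives a neighbourhood $V_n(x)\subset W$ of $x$ such that $f^n(V_n(x))$ is a $u$-admissible manifold of size $r$ with controlled $C^{1+\alpha}$ norm of $\psi$, and such that $\log\operatorname{Jac}(f^n|_{V_n(x)})$ has bounded H\"older variation (bounded distortion).

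Step two is the genuinely new part. Define $\mathcal{A}_n$, the set of measures $\nu$ on $M$ of the form $\int \rho_W\, dm_W\,d\xi(W)$, where the $W$ are admissible manifolds of size $r$ and $\rho_W$ are densities with $\log$-H\"older constant bounded by $L$; this set is weak$^*$ compact. In \cite{CDP} one shows that $f^k_*m_W$ dominates a measure in $\mathcal{A}$ of total mass comparable to the proportion of points for which $k$ is an effectively hyperbolic time, via a Besicovitch/Vitali covering of those points by the sets $V_k(x)$. For $\mu_D$ the densities $\mathbf 1_D$ are not H\"older, so we cannot push forward directly. The fix is Lebesgue density: for $m_W$-a.e.\ $x\in D_0$, $m_W(D\cap B)/m_W(B)\to1$ over balls $B$ shrinking to $x$. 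The sets $V_n(x)$ have diameter going to zero exponentially, are comparable to balls by bounded distortion and the admissible geometry, and hence satisfy a uniform doubling condition. Therefore for each $\epsilon>0$ there is $D_\epsilon\subset D_0$ with $m_W(D_\epsilon)>\frac12m_W(D_0)$ and $N_\epsilon$ such that $m_W(D\cap V_n(x))\ge(1-\epsilon)m_W(V_n(x))$ for $x\in D_\epsilon$ and all effectively hyperbolic $n\ge N_\epsilon$. Running the CDP covering argument on the points of $D_\epsilon$ then gives a decomposition
\[
f^k_*\mu_D\ \ge\ \nu_k-\eta_k,\qquad \nu_k\in\mathcal{A},\quad \|\eta_k\|\le C\epsilon\|\nu_k\|.
\]
Here $\nu_k$ is the push-forward of $m_W|_{V_k(x)}$, suitably normalized, and $\eta_k$ is the push-forward of $m_W|_{V_k(x)\setminus D}$. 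Averaging over $k<n$ gives $\mu_n\ge\bar\nu_n-\bar\eta_n$ with $\|\bar\nu_n\|\ge c\,\delta$ and $\|\bar\eta_n\|\le C\epsilon$, where $c$ depends on the distortion constants but not on $\epsilon$.

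Step three passes to the limit along the subsequence defining $\mu$, then lets $\epsilon\to0$ along a diagonal subsequence. Using compactness of $\mathcal A$, this shows that $\mu\ge\nu$ for some nonzero $\nu$ in the closure of $\mathcal{A}$. Such a $\nu$ has absolutely continuous conditionals on unstable-type admissible leaves, and by the CDP argument those leaves are genuine local unstable manifolds of $\mu$-typical points. The ergodic decomposition of $\mu$ then yields an ergodic component giving positive weight to the part dominated by $\nu$, and that component is SRB as in \cite{CDP}.

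The main obstacle is step two, specifically the uniform-in-$x$ density estimate on the nonround sets $V_n(x)$. This requires showing that they form a differentiation basis for $m_W$ (Vitali type) even though they are defined by the dynamics; I would try first to sandwich $V_n(x)$ between balls of comparable radii, $B(x,c\,r_n)\subset V_n(x)\subset B(x,C r_n)$, and then apply the standard Lebesgue density theorem with Egorov's theorem to make the convergence uniform on $D_\epsilon$.
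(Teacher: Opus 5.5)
\textbf{Gap in Step two.} The argument depends on a false geometric claim. You need $m_W(D\cap V_n(x))\ge(1-\epsilon)m_W(V_n(x))$ uniformly on a large set $D_\epsilon$. You propose to get this from the ball version of the Lebesgue density theorem via $B(x,c\,r_n)\subset V_n(x)\subset B(x,C r_n)$, but that sandwich does not hold in general.

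Bounded distortion controls only $\text{Jac}(f^n|_W)$, i.e.\ volume, not shape. When $d_u\ge 2$, $Df^n|_{TW}$ can be strongly non-conformal, and effective hyperbolicity only bounds the weakest expansion from below. Already for a linear Anosov automorphism with unstable eigenvalues $\lambda_1>\lambda_2>1$, the set $V_n(x)=f^{-n}B(f^nx,r)$ is essentially an ellipse with semi-axes comparable to $r\lambda_1^{-n}$ and $r\lambda_2^{-n}$. Hence $m_W(V_n(x))/m_W(B(x,\mathrm{diam}\,V_n(x)))\asymp(\lambda_2/\lambda_1)^n\to 0$, and density along balls says nothing about density along $V_n(x)$.

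The CDP Besicovitch lemma does not rescue this either. It covers by Bowen balls of one fixed time $n$, pulled back from round balls in $f^n(W)$. It gives no Vitali or Besicovitch property for the multiscale family $\{V_n(x)\}_n$, which any differentiation theorem would need; in the nonlinear case these eccentric sets also change orientation with $x$ and $n$. So neither $D_\epsilon$ nor the bound $\|\eta_k\|\le C\epsilon\|\nu_k\|$ is established, and Step three has nothing to pass to the limit.

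\textbf{How the paper avoids this, and how to repair your route.} The paper never approximates $\mathbf{1}_D$ by H\"older densities. It enlarges standard pairs to $L^\infty$ densities with values in $[0,L]$, so the piece of $f^j_*\mu_D$ on each Bowen ball is represented exactly by $(B(f^jx,r),\rho^x_j)$. Here $\rho^x_j(y)=\frac{\text{Jac}_{f^j}(x)}{\text{Jac}_{f^j}(f^{-j}y)}\mathbf{1}_{f^j(D\cap B_j(x,r))}(y)$, which is bounded by distortion. Compactness is recovered by putting the weak$^*$ topology on the density coordinate (Banach--Alaoglu), with $\Phi^*$ still continuous. Then $\mu^0_m\le\mu_m$ holds with no error term, and $|\mu^0_m|\ge\beta_0$ comes directly from $\mu_D(\tilde D_j)\ge\mu_D(D_j)$.

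If you want to keep H\"older densities, you can repair Step two with an averaged estimate instead of a pointwise density statement:
\begin{enumerate}
\item Choose a compact $K\subset D$ with $m_W(D\setminus K)<\epsilon$, and run the fixed-time Besicovitch cover on $K\cap D_n$.
\item At effectively hyperbolic times $\mathrm{diam}\,V_n(x)\le\delta_n\to 0$ uniformly, so the union of the cover lies in the $\delta_n$-neighbourhood $K^{(n)}$ of $K$.
\item With the $\frac1p\sum_i$ weighting, the error mass at time $n$ is then at most $m_W(K^{(n)}\setminus K)\to 0$, so its Ces\`aro averages vanish.
\end{enumerate}
In either version you must also discard the times $j<q_n$, as the paper does, so that the limiting leaves satisfy the backward-contraction condition defining $\mathcal{R}_{K,L}$. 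Your $\mathcal{A}$ does not yet encode this condition.
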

 \begin{remark}
 It is worth mentioning that if $W$ is a local unstable manifold, then a similar result can be obtained from Theorem 4.1 in \cite{CPZ}. However, even in this particular case the theorem gives the result only for \emph{almost every} unstable leaf, while our result holds true for \emph{every} $u$-admissible manifold. 
 \end{remark}
Our proof of Theorem \ref{main-thm} follows the line of argument in \cite{CDP} but requires some substantial modifications which we will describe below. This modifications are due to the fact that while the measure $\mu_D$ in \eqref{eq:111} is absolutely continuous with respect to the leaf-volume $m_W$ in \eqref{eq:2}, the corresponding density function is only of class $L^\infty$.

\subsection{Generalized standard pairs} We denote by $\mathcal{W}$ the space of all $u$-admissible local manifolds $W$ at points $x\in D$ of size $r\in\mathbb{R}$. Further, given a collection of positive numbers $\iota=\{r,\theta,\gamma,\kappa\}$, consider the set $\mathcal{W}_\iota\subset\mathcal{W}$ of all $u$-admissible local manifolds $W$ of a given size $r$ for which 
\begin{equation}\label{eq:admiddible-1}
\angle(E^s(x),E^u(x))\ge\theta, \quad \|\psi\|_{C^1}\le\gamma, 
\text{ and }\|\psi\|_{C^{1+\alpha}}\le\kappa.
\end{equation}
Elements of $\mathcal{W}_\iota$ are admissible manifolds with controlled geometry.

For $W\in\mathcal{W}$ consider an $L^\infty$ (i.e., essentially bounded) function 
\begin{equation}\label{eq:rho}
\rho: W\to\mathbb{R},
\end{equation}
which we call \emph{the density function}.
\footnote{Recall that a function $g$ is essentially bounded if there is a number $c>0$ such that for almost every $x\in W$ with respect to the leaf volume $m_W$ we have $|\rho(x)|\le c$.} The space of such functions is a topological space with respect to the norm topology induced by the essential supremum norm $\|\rho\|_\infty=\text{ess sup}|\rho|$ making it a Banach space. 

Finally, we consider the pair $(W,\rho)$ which we call a \emph{generalized standard pair}. We denote by $\mathcal{P}$ the space of all such pairs.
\begin{remark}
The notion of \emph{standard pairs} was first introduced by Chernov and Dolgopyat, see \cite{CD} with the density function $\rho$ assumed to be continuous. In \cite{CDP} the authors further strengthened this requirement assuming that $\rho$ is H\"older continuous. We stress that to obtain our result we need to require  that $\rho$ is of class $L^\infty$; this justifies calling the pair $(W,\rho)$ a generalized standard pair.
\end{remark} 
Given a collection $\iota=\{r,\theta,\gamma,\kappa\}$ and a number $L>0$, we denote by $\mathcal{P}_{\iota, L}\subset\mathcal{P}$, the set of all generalized standard pairs $(W,\rho)$ for which 
$W\in \mathcal{W}_\iota$ and $\rho(x)\in[0, L]$ for almost every 
$x\in W$ with respect to the leaf volume $m_W$.

The space $\mathcal{P}_{\iota, L}$ carries a natural product topology; each element of this space is specified
by a triple $(x,\psi,\rho)$ with $x\in\mathcal{E}$ and the functions 
$\psi$ and $\rho$ given by \eqref{eq:psi} and \eqref{eq:rho} respectively. A small neighborhood $U$ of $x$ can be identified with 
$\mathbb{R}^d$ via the exponential map. Then the second coordinate can be identified with the set of all $C^1$ functions 
$\psi: B^u(r)\to\mathbb{R}^{d_s}$, and the third coordinate can be identified with the set of all $L^\infty$ functions $\rho: B^u(r)\to R$ with $\|\rho\|_\infty\leq L$. 

Thus we can introduce a topology in the space 
$\mathcal{P}_{\iota, L}$ as the product topology in the following way: in the first coordinate we choose the topology on the manifold $M$, in the second coordinate we choose the $C^1$ topology on the space of functions $\psi$, and in the third coordinate we choose the weak$^*$ topology in the space of $L^\infty$ functions viewed as the dual to the space of $L^1$ functions. 

We stress that our choice of the third coordinate and the respective topology in this coordinate reflects the nature of the situation we deal with: unlike the case in \cite{CDP} where the function $\rho$ is H\"older continuous and respectively the topology in the third coordinate is the topology in the space of H\"older continuous functions, the function $\rho$ in our case is only of class $L^\infty$.    

Note that in the topology we introduced a sequence of standard pairs $(W_n,\rho_n)$ converges to a standard pair $(W,\rho)$ if 
$W_n\to W$ and for every continuous function $a$ on $M$ we have that 
\begin{equation}\label{eq:convergence}
\Bigl|\int_{W_n}\rho_n(y)a(y)dm_{W_n}(y)
-\int_W\rho(y)a(y)dm_W(y)\Bigr|\to 0 \text{ as } n\to\infty.
\end{equation}
We need the following lemma.
\begin{lemma}\label{l1}
The space $\mathcal{P}_{\iota, L}$ is compact in the product topology.
\end{lemma}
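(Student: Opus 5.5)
We prove compactness by showing that every sequence $(W_n,\rho_n)=(x_n,\psi_n,\rho_n)$ in $\mathcal{P}_{\iota,L}$ has a subsequence converging, in the product topology and in the sense of \eqref{eq:convergence}, to an element of $\mathcal{P}_{\iota,L}$. We treat the three coordinates one after the other, passing to a further subsequence at each stage. (A diagonal or product-of-compacta argument is also available, but sequences suffice: the first two factors are metrizable, and on the third factor the weak$^*$ topology on a norm-bounded ball of $L^\infty=(L^1)^*$ is metrizable because $L^1$ of a ball is separable.)

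\emph{First coordinate.} Since $M$ is compact, we may assume $x_n\to x$. As $\iota$ is fixed, the cone data also converge: along a further subsequence $E^s(x_n)\to E^s_\infty$ and $E^u(x_n)\to E^u_\infty$ in the Grassmannian, and the bound $\angle(E^s,E^u)\ge\theta$ passes to the limit. To compare functions defined on different balls, we use charts $\exp_{x_n}$ together with linear maps $B^s(x_n,r)\to B^s(x,r)$ that converge to the identity. Here one subtle point must be addressed: the limit point must lie in $\mathcal{E}$ (or in $D$), with $E^{s,u}(x)$ equal to the limiting subspaces. Since $E^{s,u}$ are only measurable and $\mathcal{E}$ is not closed, I would handle this by interpreting $\mathcal{W}_\iota$ as the closure of the admissible manifolds with these geometric bounds. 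In other words, a limit manifold is admissible at $x$ with respect to the limiting splitting, as is implicitly done in \cite{CDP}. I expect this is how the statement is meant, and I would state this convention explicitly.

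\emph{Second coordinate.} The transported functions $\psi_n$ satisfy $\|\psi_n\|_{C^1}\le\gamma$ and $\|\psi_n\|_{C^{1+\alpha}}\le\kappa$. Hence the $\psi_n$ and their derivatives are uniformly bounded and uniformly $\alpha$-Hölder, so by Arzel\`a--Ascoli there is a subsequence converging in $C^1$ to some $\psi$. Uniform limits preserve the Hölder seminorm bound on $D\psi$ and the sup bounds, so $\|\psi\|_{C^1}\le\gamma$ and $\|\psi\|_{C^{1+\alpha}}\le\kappa$. Thus $W_n\to W$ with $W\in\mathcal{W}_\iota$.

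\emph{Third coordinate.} Pull each $\rho_n$ back to a function $\tilde\rho_n$ on the fixed ball $B=B^s(x,r)$ via the graph parametrization. The constraint $0\le\tilde\rho_n\le L$ is preserved. By Banach--Alaoglu, together with the metrizability noted above, there is a subsequence with $\tilde\rho_n\to\tilde\rho$ weak$^*$. The set $\{0\le g\le L\}$ is weak$^*$ closed: test against indicators of measurable sets to get $0\le\tilde\rho\le L$ a.e. It remains to verify \eqref{eq:convergence}. Write
$$\int_{W_n}\rho_n a\,dm_{W_n}=\int_B \tilde\rho_n(u)\,a(\Phi_n(u))\,J_n(u)\,du,$$
where $\Phi_n$ is the graph map and $J_n$ the Jacobian of the induced volume. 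By $C^1$ convergence, $a\circ\Phi_n\,J_n\to a\circ\Phi\,J$ uniformly, hence in $L^1(B)$. Split the difference as
$$\int\tilde\rho_n\,(a\circ\Phi_n J_n-a\circ\Phi J)+\int(\tilde\rho_n-\tilde\rho)\,a\circ\Phi J.$$
The first term is bounded by $L\,\|a\circ\Phi_n J_n-a\circ\Phi J\|_{L^1}\to0$, and the second tends to $0$ by weak$^*$ convergence. This is the main step where the $L^\infty$ setting matters, and the uniform bound $L$ is exactly what makes the first term small.

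The main obstacle is therefore not analytic. It is the bookkeeping in the first coordinate: identifying balls and the function spaces over different base points, and making sure the limit base point and splitting yield a legitimate element of $\mathcal{W}_\iota$.
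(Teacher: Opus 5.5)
Your proposal is correct and follows essentially the same route as the paper: compactness of the geometric coordinates $\mathcal{W}_\iota$ from the bounds \eqref{eq:admiddible-1} via Arzel\`a--Ascoli, and compactness of the density coordinate via Banach--Alaoglu. Beyond that, you fill in details the paper's two-line proof leaves implicit: the closure convention for base points and splittings (needed because membership in $D$ or $\mathcal{E}$ and the measurable splitting are not closed conditions), the weak$^*$-closedness of $\{0\le\rho\le L\}$, and the two-term estimate showing that convergence in the product topology really gives \eqref{eq:convergence} on the moving manifolds $W_n$.
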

\begin{proof}[Proof of the lemma] It suffices to show compactness of each coordinate in the product structure of $\mathcal{P}_{\iota, L}$ the space. The compactness of the first two coordinate, that is of the space $\mathcal{W}_\iota$, follows from Conditions \eqref{eq:admiddible-1}. The compactness of the third coordinate is an immediate corollary of \eqref{eq:convergence} and the  Banach-Alaoglu theorem. The desired result follows. 
\end{proof}

Finally, we set a dynamical condition on standard pairs, which makes the corresponding admissible manifold behave almost like a local unstable manifold. Fix numbers $C>0$, 
$\overline{\lambda}>0$, $q>0$ and let 
$\mathbb{J}=(C,\overline{\lambda})$. Denote by 
$$
\begin{aligned}
\mathcal{Q}_{\mathbb{J},N}=\{(W,\rho)\in\mathcal{P}&: d(f^{-j}(y),f^{-j}(z))\le Ce^{\overline{\lambda }j}d(y,z) \\ &\text{ for all } y,z\in W \text{ and } 0\le j\le q\}.
\end{aligned}
$$
We also denote by  
$$
\begin{aligned}
H_{\mathbb{J},N}(W)=\{y&\in W: T_yM=T_yW\oplus G \text{ for some }G\subset T_yM\\
& \text{satisfying }\|(D_y(f^{-j}|G))^{-1}\|\le Ce^{\overline{\lambda}j} \text{ and all } 0\leq j\leq q\}.
\end{aligned}
$$
Given $b>0$, let $K=\iota\cup \mathbb{J}\cup b$ and 
$$
\mathcal{R}_{K,L,N}=\{(W,\rho)\in\mathcal{P}_{\iota, L}\cap\mathcal{Q}_{\mathbb{J},N}: m_W(H_{\mathbb{J},N}(W))\geq b\}.
$$ 
Observe that the sets $\mathcal{R}_{K,L,N}$ are nested in $N$, $R_{K,L,N+1}\subset R_{K,L,N}$. By Lemma \ref{l1}, sets 
$\mathcal{P}_{\iota, L}$ are compact and as shown in \cite{CDP}, the sets $\mathcal{Q}_{\mathbb{J},N}$ and 
$\{(W,\rho)\in\mathcal{P}: m_W(H_{\mathbb{J},N}(W))\ge b\}$ are compact as well implying that sets $\mathcal{R}_{K,L,N}$ are compact. 

We let 
$\mathcal{R}_{K,L}=\bigcap_{N=0}^{\infty}\mathcal{R}_{K,L,N}$ and observe that this set is compact (hence, non-empty) and the elements of $\mathcal{R}_{K,L}$ are generalized standard pairs 
$(W,\rho)$ where $W$ is a genuine local unstable manifold.

\subsection{Continuous operators} We define the averaging operator $\Phi:\mathcal{\mathcal{P}}\to C^*(M)$ by the formula: for any continuous function $a=a(x)\in C(M)$, 
$$
\Phi(W,\rho)(a)=\int_W\rho(x)a(x)dm_W(x).
$$
Denote by $\mathcal{M}(M)$ the set of finite Borel measures on $M$ and consider the adjoint operator 
$\Phi^*: M(\mathcal{P}_\iota)\to\mathcal {M}(M)$ given by
$$
\Phi^*(\nu)(a)=\int_{\mathcal{P}_\iota}d\nu\int_{W}\rho(x)a(x)dm_{W}.
$$
\begin{lemma}[see \cite{CDP}]\label{l2}
The operators $\Phi$ and $\Phi^*$ are continuous.
\end{lemma}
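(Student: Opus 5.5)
To prove Lemma \ref{l2}, I would first show that $\Phi$ is continuous from $\mathcal{P}_{\iota,L}$ with the product topology into $C^*(M)$ with the weak$^*$ topology. Then I would derive continuity of $\Phi^*$ by duality. Since $\mathcal{P}_{\iota,L}$ is compact by Lemma \ref{l1}, it suffices to check sequential continuity for each fixed $a\in C(M)$. Concretely, one must show that if $(W_n,\rho_n)\to(W,\rho)$, then $\Phi(W_n,\rho_n)(a)\to\Phi(W,\rho)(a)$. This is exactly \eqref{eq:convergence}, but I would verify it from the coordinate description rather than take it as the definition.

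\textbf{Step 1: continuity of $\Phi$.} Write $W_n=\exp_{x_n}\operatorname{graph}\psi_n$ and $W=\exp_x\operatorname{graph}\psi$, and pull everything back to the reference ball $B^u(r)$. Then
$$\Phi(W_n,\rho_n)(a)=\int_{B^u(r)}\rho_n(u)\,a_n(u)\,J_n(u)\,du,$$
where $a_n(u)=a(\exp_{x_n}(u,\psi_n(u)))$ and $J_n$ is the Jacobian of the graph parametrization composed with the exponential map. Since $x_n\to x$, $\psi_n\to\psi$ in $C^1$ and $a$ is uniformly continuous, we have $a_nJ_n\to aJ$ uniformly on $B^u(r)$. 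For this I identify nearby tangent spaces by a fixed continuous trivialization, which is harmless on a small neighborhood. I then split the difference as
$$\int\rho_n(a_nJ_n-aJ)\,du+\int(\rho_n-\rho)\,aJ\,du.$$
The first term is bounded by $L\,\|a_nJ_n-aJ\|_\infty\,\mathrm{vol}(B^u(r))\to0$. The second term tends to $0$ by weak$^*$ convergence of $\rho_n$ in $L^\infty=(L^1)^*$, because $aJ\in L^1(B^u(r))$.

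\textbf{Step 2: continuity of $\Phi^*$.} Here continuity means weak$^*$ continuity from measures on $\mathcal{P}_{\iota,L}$ to $\mathcal{M}(M)$. Fix $a$. By Step 1 the function $F_a(W,\rho)=\Phi(W,\rho)(a)$ is continuous on the compact space $\mathcal{P}_{\iota,L}$, and it is bounded by $L\|a\|_\infty\sup_W m_W(W)$, which is uniformly bounded thanks to the $C^1$ bound $\gamma$. Then $\Phi^*(\nu)(a)=\int F_a\,d\nu$, and so $\nu_k\to\nu$ weak$^*$ implies $\Phi^*(\nu_k)(a)\to\Phi^*(\nu)(a)$. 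Measurability of $F_a$ is automatic from continuity.

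\textbf{Main obstacle.} The only delicate point is the mixed term in Step 1. The densities converge only weakly, so one must make sure that the test function $a_nJ_n$ converges \emph{strongly} (uniformly) and that the $\rho_n$ are uniformly bounded. The bound $\|\rho_n\|_\infty\le L$ handles the second requirement. The first requires that the identification of the base points $x_n\to x$ and of the subspaces $E^u(x_n)$ with a fixed model be continuous, which is a subtle issue since $E^u$ is only measurable. I would resolve this by treating the subspace as part of the data, included in the first (compact) coordinate, so that convergence includes convergence of the splittings.
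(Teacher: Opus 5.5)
Your argument is correct. It follows the same two-step scheme as the argument the paper imports from \cite{CDP}:
\begin{enumerate}
\item show that $F_a(W,\rho)=\int_W\rho a\,dm_W$ is continuous on $\mathcal{P}_{\iota,L}$ for each $a\in C(M)$;
\item deduce weak$^*$ continuity of $\Phi^*$ by duality.
\end{enumerate}

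The paper gives no proof of its own, and the cited proof does not transfer verbatim. As the paper itself notes, in \cite{CDP} the densities are H\"older and the third coordinate carries a H\"older (hence uniform) topology, so $\rho_n a_n J_n\to\rho aJ$ uniformly and continuity is immediate. Here $\rho_n\to\rho$ only weak$^*$ in $L^\infty$, and the paper in effect builds continuity of $\Phi$ into the definition of its topology via \eqref{eq:convergence}. Your splitting supplies exactly the modification needed: a uniformly convergent test function paired against densities bounded by $L$, plus a weak$^*$ term paired against the fixed $L^1$ function $aJ$.

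Your remark that the splitting $E^u(x)\oplus E^s(x)$ must be part of the data is a genuine correction to the paper. Strictly, what is needed is an identification of $E^u(x_n)$ and $E^s(x_n)$ with a fixed model that converges to that of the limit. If pairs are described only by triples $(x,\psi,\rho)$ and $E^u$ is merely measurable, the $C^1$ and weak$^*$ topologies on $\psi$ and $\rho$ are not even well defined.

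One small fix concerns your reduction to sequences. Compactness is not what justifies testing continuity on sequences, since compact spaces need not be sequential. The correct reason is that $L^1(B^u(r))$ is separable, so the $L$-ball of $L^\infty(B^u(r))$ with the weak$^*$ topology is metrizable, and hence $\mathcal{P}_{\iota,L}$ is metrizable. Alternatively, run your estimate along nets, where it works verbatim.
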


\subsection{Proof of the main theorem.} We follow the line of arguments in \cite{CDP} and we stress necessary modifications which are due to the fact that the density function is only of class $L^\infty$. Given a number $\sigma>0$ and a point $x$, we call the time $m$ effectively hyperbolic for $x$ if for all 
$0\leq k\leq m-1$ the following holds 
\begin{equation}
\sum_{j=k}^{m-1}(\lambda^u(f^j(x))-\Delta(f^j(x)))
\geq\sigma(m-k).
\end{equation} 
 By a result in \cite{CP}, if a point $x$ is effective hyperbolic, then the set of effectively hyperbolic times for it has a positive lower density.

Let $D_n$ be the set of points $x\in D$ for which $n$ is an effective hyperbolic time. For each $n$ and $x\in D_n$ consider the $n$-Bowen ball of radius $r$ centered in $x$:
$$
B_n(x,r)=\{y\in W: d_{f^i(W)}(f^i(x),f^i(y))\leq r, \text{ for }i=0,1,...,n-1\},
$$
where $d_W$ is the distance in $W$ induced by the Riemannian metric in $W$. It is easy to see that for all $n>0$ and $x\in D_n$ we have that $f^n(B_n(x,r))=B(f^n(x),r)$ is the ball in $f^n(W)$ centered at $x$ of radius $r$ and is an admissible manifold of size $r$.

Following the argument in \cite{CDP}, it is not difficult to show that the density of the measure $f^n_*\mu_D$ at the point  $y\in B(f^n(x),r)$ is equal to 
$\frac{\rho^x_n(y)}{\text{Jac}_{f^n}(x)}$, where 
\begin{equation}\label{eq:00}
\rho^x_n(y)=\frac{\text{Jac}_{f^n}(x)}{\text{Jac}_{f^n}(f^{-n}y)}1_{f^n(D\cap B_n(x,r))}(y).
\end{equation}
We stress that appearance of the term $1_{f^n(D\cap B_n(x,r))}(y)$ in \eqref{eq:00} is due to the fact that $\rho^x_0$ is the characteristic function of the set $D\cap B(x,r)$ and that the function $\rho^x_n(y)$ is of class $L^\infty$. However, using H\"older continuity of the Jacobian, one can show (see Lemma 5.3 in \cite{CDP}) that the is $L>0$ independent of $x$ and $n$ such that 
\begin{equation}\label{eq:3412}
\rho^x_n(y)\in\bigl[\frac1L,L\bigr] \text{ for every } y\in f^n(D\cap B_n(x,r)).
\end{equation}
In particular, the pair 
$(B(f^n(x),r), \rho^x_n)\in\mathcal{P}_{\iota, L}\cap\mathcal{Q}_{\mathbb{J},n}$. 

Our next step is to show that the measures $\mu_n$ have a uniformly large projection on the set that can be ``covered'' (with finite multiplicity) by generalized standard pairs. This idea is formalized below.

Applying a version of Besicovitch Covering Lemma from \cite{CDP}, for each effectively hyperbolic time $n$ there exist  finite sets 
$A^n_1,..., A^n_p\subset D_n$ such that 
$$
D_n\subseteq\bigcup_{i=1}^p\bigcup_{z\in A^n_i}B_j(z,r)=:\tilde{D_n}
$$ 
and for every $i$ and $z,w\in A^n_i$, the $n$-Bowen balls 
$B_n(z,r)$ and $B_n(w,r)$ are disjoint. Given a point 
$y\in f^n(\tilde{D_n})$, denote by $p_n(y)$ the multiplicity of the Besicovitch cover at the point $f^{-n}(y)$. Note that 
$1\leq p_n(y)\le p$, so that $q^x_n(y)=\frac{\rho^x_n(y)}{p_n(y)}$ is still an $L^\infty$ (not continuous) function bounded from above by $L$ 
and the pair 
$(B(f^n(x),r), q^x_n)\in\mathcal{P}_{\iota, L}\cap\mathcal{Q}_{\mathbb{J},n}$. Moreover, as shown in \cite{CDP} (see Proposition 4.8) we may assume that 
$(B(f^n(x),r), q^x_n)\in R_{K,L,n}$.


In order to complete the proof of the theorem, we fix 
$\beta=\frac{\sigma}{L^2}$, a sequence of integers $q_n\to\infty$ such that $q_n\leq \frac{1}{2}\beta n$, and consider the sequence of measures $\nu_n$ on $\mathcal{P}_{\iota, \tilde{L}}$ with 
$\tilde{L}=\frac{L}{m_{W}(D)}$ given by 
\begin{equation} \label{eq:000}
\nu_n=\frac{1}{m_W(D)}\frac1n\sum_{j=q_n}^{n-1}
\sum_{i=1}^p\sum_{x\in A^j_i}\frac{1}{\text{Jac}_{f^j}(x)}\delta_{(B(f^j(x),r), q^x_j)}.
\end{equation}
We describe some properties of measures $\nu_n$.
\begin{lemma}
For every $n\ge N>0$ we have that 
$\nu_n\in\mathcal{M}(\mathcal R_{K,\tilde{L},q_N})$. 
\end{lemma}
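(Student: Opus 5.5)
The claim is that every atom of $\nu_n$ is a generalized standard pair lying in $\mathcal R_{K,\tilde L,q_N}$. Since $\nu_n$ is a finite nonnegative combination of Dirac masses, by \eqref{eq:000} its support consists exactly of the pairs
$$
(B(f^j(x),r),q^x_j), \qquad q_n\le j\le n-1,\ x\in A^j_i,
$$
where $j$ is an effectively hyperbolic time. So it suffices to show that each such pair lies in $\mathcal R_{K,\tilde L,q_N}$. This has three parts. First, $B(f^j(x),r)\in\mathcal W_\iota$ and the density lies in $[0,\tilde L]$. Second, the pair lies in $\mathcal Q_{\mathbb J,q_N}$. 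Third, $m_W(H_{\mathbb J,q_N}(W))\ge b$ for $W=B(f^j(x),r)$.

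For the first part I would use the facts already stated before \eqref{eq:000}. At an effectively hyperbolic time $j$, the image $f^j(B_j(x,r))=B(f^j(x),r)$ is an admissible manifold of size $r$ with controlled geometry, as in \cite{CDP}. The bound \eqref{eq:3412} together with $1\le p_j\le p$ gives $0\le q^x_j\le L$ almost everywhere. Here the normalization matters. The weight $\frac{1}{m_W(D)\,\mathrm{Jac}_{f^j}(x)}$ in \eqref{eq:000} belongs to the measure $\nu_n$, not to the density. So the pair $(B(f^j(x),r),q^x_j)$ already has density bounded by $L\le\tilde L$ whenever $m_W(D)\le 1$. If $m_W(D)>1$, I would instead absorb the factor $1/m_W(D)$ into the density, which is exactly how $\tilde L=L/m_W(D)$ arises. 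Either way the pair lies in $\mathcal P_{\iota,\tilde L}$. The indicator factor in \eqref{eq:00} causes no trouble, because membership in $\mathcal P_{\iota,\tilde L}$ only requires an essential bound, not continuity.

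For the second and third parts I would use the nesting $\mathcal R_{K,L,N+1}\subset\mathcal R_{K,L,N}$, together with the fact, quoted from \cite{CDP} (Proposition 4.8), that $(B(f^j(x),r),q^x_j)\in\mathcal R_{K,L,j}$. The conditions defining $\mathcal Q_{\mathbb J,N}$ and $H_{\mathbb J,N}$ involve only the geometry of $W$ and backward iterates up to the index; they do not involve $\rho$. The backward contraction of $f^j(B_j(x,r))$ for $0\le i\le j$ comes from the effective hyperbolicity of the time $j$. Since $j\ge q_n\ge q_N$ (taking $q_n$ nondecreasing, which we may assume), we get $\mathcal R_{K,L,j}\subset\mathcal R_{K,L,q_N}\subset\mathcal R_{K,\tilde L,q_N}$, the last inclusion because the density bound is monotone in $L$.

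I expect the main obstacle to be the measure bound $m_W(H_{\mathbb J,q_N}(W))\ge b$ for the image ball. Here the restriction to $D$ must be handled carefully. The set $H$ is defined for the whole manifold $W=B(f^j(x),r)$, independently of $\rho$, so the possibly sparse support of $q^x_j$ does not affect it. I would invoke \cite{CDP} directly for the backward-contraction estimates at all points of $f^j(B_j(x,r))$, using the cone invariance and the effective-hyperbolic-time inequality. Finally, I would note that the total mass of $\nu_n$ is finite, so $\nu_n\in\mathcal M(\mathcal R_{K,\tilde L,q_N})$.
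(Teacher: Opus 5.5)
Your proposal is correct and is essentially the paper's argument: each atom $(B(f^j(x),r),q^x_j)$ with $j\ge q_n$ lies in an $\mathcal R$-set by the cited results of \cite{CDP}, and nestedness together with $j\ge q_n\ge q_N$ finishes the proof. You go through $\mathcal R_{K,L,j}$ via Proposition 4.8, while the paper goes directly to $\mathcal R_{K,\tilde L,q_n}$ via Lemma 4.3. Both you and the paper tacitly take $q_n$ nondecreasing.

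The one step to fix is your case $m_W(D)>1$. Absorbing $1/m_W(D)$ into the density moves the atoms of $\nu_n$, so that proves a statement about a different measure. Instead, note that this case does not occur: $m_W(D)\le m_W(W)\le 1$ for $W$ of small size $r$. Hence $L\le\tilde L$ and $\mathcal R_{K,L,q_N}\subset\mathcal R_{K,\tilde L,q_N}$.
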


\begin{proof}[Proof]
Fix $j\geq q_n$. Using Lemma 4.3 in \cite{CDP}, it is not difficult to show that $(B(f^j(x),r), q^x_j)\in R_{K,\tilde{L},q_n}$ for every 
$x\in D_j$ and hence, for every $x\in A^j_i$. This fact and \eqref{eq:000} imply that 
$\nu_n\in\mathcal{M}(\mathcal R_{K,\tilde{L},q_n})$. Since 
$R_{K,\tilde{L},n_1}\subseteq R_{K,\tilde{L},n_2}$ for any $n_2<n_1$, we conclude that 
$\nu_n\in\mathcal{M}(\mathcal R_{K,\tilde{L},q_N})$ for every 
$n\ge N>0$. 
\end{proof}
\begin{lemma}
There is $C>0$ such that $|\nu_n|\leq\frac{C}{m_W(D)}$ for all $n\ge 0$.
\end{lemma}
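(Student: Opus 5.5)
The total mass of $\nu_n$ is, by \eqref{eq:000},
$$
|\nu_n|=\frac{1}{m_W(D)}\frac1n\sum_{j=q_n}^{n-1}\sum_{i=1}^p\sum_{x\in A^j_i}\frac{1}{\text{Jac}_{f^j}(x)} .
$$
So it suffices to show that for each fixed $j$ the inner double sum is bounded by a constant $C$ independent of $j$. Averaging over at most $n$ values of $j$ and dividing by $n$ then gives $|\nu_n|\le C/m_W(D)$.

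To bound the inner sum for fixed $j$, I will compare $1/\text{Jac}_{f^j}(x)$ with the leaf volume of the Bowen ball $B_j(x,r)$. Since $f^j(B_j(x,r))=B(f^j(x),r)$ is an admissible manifold of size $r$ in $\mathcal{W}_\iota$, its leaf volume lies between two constants $c_1,c_2>0$ depending only on $\iota$ and $r$. Changing variables gives
$$
m_{f^jW}(B(f^jx,r))=\int_{B_j(x,r)}\text{Jac}_{f^j}(z)\,dm_W(z).
$$
The bounded distortion estimate behind \eqref{eq:3412} (Lemma 5.3 in \cite{CDP}) gives $\text{Jac}_{f^j}(z)/\text{Jac}_{f^j}(x)\in[1/L,L]$ for all $z\in B_j(x,r)$, since $j$ is an effectively hyperbolic time for $x\in D_j$. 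Here I must use the distortion bound on the whole Bowen ball, not just on $D\cap B_j(x,r)$. The argument in \cite{CDP} is a purely geometric estimate on $B_j(x,r)$, so it does not depend on $D$. Consequently
$$
\frac{1}{\text{Jac}_{f^j}(x)}\le \frac{L}{c_1}\,m_W(B_j(x,r)).
$$

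Next I sum over $x\in A^j_i$ for fixed $i$. The Bowen balls $B_j(x,r)$, $x\in A^j_i$, are pairwise disjoint subsets of $W$, so their total leaf volume is at most $m_W(W)$. This in turn is bounded by a constant depending only on the size and geometry of $W$. Summing over the $p$ families, where $p$ is the Besicovitch constant and depends only on the dimension, gives
$$
\sum_{i=1}^p\sum_{x\in A^j_i}\frac{1}{\text{Jac}_{f^j}(x)}\le \frac{pL}{c_1}\,m_W(W)=:C.
$$
This completes the estimate, with $C$ independent of $j$ and $n$.

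The main obstacle is justifying that the Jacobian distortion bound and the lower bound on the volume of $B(f^jx,r)$ hold uniformly. The subtle point is that the density here is an indicator function rather than a Hölder function, so one must check that \eqref{eq:3412} was derived from properties of $f$ along Bowen balls at effectively hyperbolic times and not from regularity of the density. If a lower volume bound for $B(f^jx,r)$ is not directly available, I would instead use that it is the graph of a $C^1$ function with $\|\psi\|_{C^1}\le\gamma$ over a ball of radius comparable to $r$, which gives volume at least a constant times $r^{d_u}$.
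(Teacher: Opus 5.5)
Your proposal is correct and follows essentially the same route as the paper. Both arguments use a uniform lower bound on the leaf volume of the admissible ball $B(f^j(x),r)$, together with the change of variables and bounded distortion of $\text{Jac}_{f^j}$ on the Bowen ball, to get $1/\text{Jac}_{f^j}(x)\le \frac{L}{V(r)}\,m_W(B_j(x,r))$, and then sum using disjointness within each $A^j_i$ and the $p$ families. Your version is slightly more careful than the paper's in two ways: you note that the distortion bound is needed on all of $B_j(x,r)$, not just on the part of it covered by \eqref{eq:3412}, and you keep the factor $m_W(W)$ that the paper silently drops when it writes $\sum_{x\in A^j_i}C_0\,m_W(B_j(x,r))\le C_0$.
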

\begin{proof}[Proof]
Since the space $\mathcal{W}_\iota$ is compact, the volume of each $W\in\mathcal{W}_\iota$ is uniformly bounded below by a constant which we denote by $V(r)$. For a standard pair 
$(W,\rho)\in\mathcal{P}_{\iota,\tilde{L}}$ we have that 
$$
\begin{aligned}
V(r)&\le m_{f^j(W)}(B(f^j(x),r))=\int_{B(f^j(x),r)}d{m_{f^j(W)}}(y)\\
&={\text{Jac}_{f^j}(x)}\int_{B_j(x,r)}\frac{\text{Jac}_{f^j}(z)}{\text{Jac}_{f^j}(x)}dm_W(z).
\end{aligned}
$$
As we have  seen earlier, the distortion of the Jacobian along $B_j(x,r)$ is bounded from below and above that is 
$$
\frac{1}{L}\leq\frac{\text{Jac}_{f^j}(z)}{\text{Jac}_{f^j}(x)}\leq L,
$$
and we obtain that
$$
\frac{1}{\text{Jac}_{f^j}(x)}\le\frac{L}{V(r)}m_W(B_j(x,r)).
$$ 
Setting $C_0=\frac{L}{V(r)}$, we find that 
$$
\frac{1}{\text{Jac}_{f^j}(x)}\le C_0m_W(B_j(x,r)),
$$
and taking into account that for every $j\ge 1$, $1\le i\leq p$ and $z,w\in A^j_i$ the $j$-Bowen balls $B_j(z,r)$ and $B_j(w,r)$ are disjoint, we obtain
$$
\sum_{x\in A^j_i}\frac{1}{\text{Jac}_{f^j}(x)}\leq \sum_{x\in A^j_i}C_0m_W(B_j(x,r))\leq C_0
$$ 
and consequently, the total weight of the measure $\nu_n$ admits the following upper bound
$$
|\nu_n|=\frac{1}{m_W(D)}\frac1n\sum_{j=q_n}^{n-1}
\sum_{i=1}^p\sum_{x\in A^j_i}\frac{1}{\text{Jac}_{f^j}(x)}\leq\frac{1}{m_W(D)}pC_0=\frac{C}{m_W(D)}
$$
with $C=pC_0$ thus completing the proof of the lemma.
\end{proof}
We also consider measures 
\begin{equation}\label{nucn}
\nu^c_n=\frac{1}{m_W(D)}\frac1n\sum_{j=0}^{n-1}
\sum_{i=1}^p\sum_{x\in A^j_i}\frac{1}{\text{Jac}_{f^j}(x)}\delta_{(B(f^j(x),r), q^x_j)}.\footnote{Note the difference in the definitions of measures $\nu^c_n$ and $\nu_n$: while in \eqref{nucn} the first sum starts with $j=0$, in \eqref{eq:000} it starts with $j=q_n$.}
\end{equation} 
Set $\mu^0_m=\Phi^*(\nu_m)$ and $\mu^c_m=\Phi^*(\nu^c_m)$.
From the definition of $q^x_j$ we have that
$$
\mu^0_m=\frac{1}{m}\sum_{j=q_m}^{m-1}f_*^j\mu_D|{\tilde{D_j}}
\text{ and }
\mu^c_m=\frac{1}{m}\sum_{j=0}^{m-1}f_*^j\mu_D|{\tilde{D_j}}.
$$
Then it is easy to see that 
$$
|\mu^0_m|\geq |\mu^c_m|-\frac{\beta}{2},
$$
where $|\cdot|$ is the total weight of the measure. Indeed, 
$$
|\mu^c_m|-|\mu^0_m|=\frac{1}{m}\sum_{j=0}^{q_m}|f_*^j\mu_D|{\tilde{D_j}}|\leq \frac{1}{m}\sum_{j=0}^{q_m}|f_*^j\mu_D|=\frac{q_m}{m}=\frac{\beta}{2}.
$$
\begin{lemma}
There exists such $\beta_0>0$ that $|\mu^0_m|\geq\beta_0$ for all $m$.
\end{lemma}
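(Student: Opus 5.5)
Since $|\mu^0_m|\ge|\mu^c_m|-\frac{\beta}{2}$, the plan is to prove $|\mu^c_m|\ge\beta$ for all large $m$; then $\beta_0=\beta/2$ works for large $m$, and shrinking $\beta_0$ if needed takes care of the finitely many remaining $m$ for which $|\mu^0_m|>0$. By the formula for $\mu^c_m$,
$$
|\mu^c_m|=\frac1m\sum_{j=0}^{m-1}\mu_D(\tilde D_j)\ge\frac1m\sum_{j=0}^{m-1}\mu_D(D_j),
$$
because $D_j\subseteq\tilde D_j$ (this is where the Besicovitch cover is used). Exchanging sum and integral gives
$$
\frac1m\sum_{j=0}^{m-1}\mu_D(D_j)=\int_D\frac{1}{m}\#\{0\le j<m: j \text{ is an effectively hyperbolic time for } x\}\,d\mu_D(x).
$$
So everything reduces to a lower bound, uniform in $x\in D$ (or on a subset of definite $\mu_D$-measure), for the proportion of effectively hyperbolic times up to $m$.

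For this I would use the quantitative form of the result of \cite{CP} (Pliss lemma) rather than only ``positive lower density''. Put $a_j=\lambda^u(f^jx)-\Delta(f^jx)$. The functions $\lambda^u,\lambda^s$ are bounded by $\max\log\|Df^{\pm1}\|$, so $a_j\le A$ for a constant $A$ depending only on $f$. Suppose $\frac1m\sum_{j<m}a_j\ge 2\sigma$. Pliss's lemma then gives at least $\frac{\sigma}{A-\sigma}m$ times $n\le m$ at which all tail averages $\frac{1}{n-k}\sum_{k}^{n-1}a_j$ are at least $\sigma$, i.e. effectively hyperbolic times. I expect the constant $\sigma/L^2$ in the definition of $\beta$ to arise from exactly this kind of estimate combined with the distortion constant $L$; in any case the plan is to get a constant $\beta'>0$ depending only on $\sigma$ and $f$ and then arrange $\beta\le\beta'$.

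The main obstacle is uniformity over $x\in D$. Effective hyperbolicity gives $\liminf\frac1n\sum_{k<n}\lambda(f^kx)>0$ pointwise, but not a uniform rate. I would handle it with an Egorov-type argument. Choose $\sigma$ (which the construction leaves free) smaller than half of the liminf on a set $D'\subseteq D$ with $\mu_D(D')\ge\frac12$. This is possible because $D=\bigcup_k\{\liminf>1/k\}$ up to measure zero. Next choose $N_0$ so that $D''=\{x\in D':\frac1n\sum_{k<n}\lambda(f^kx)\ge2\sigma\ \forall n\ge N_0\}$ has $\mu_D(D'')\ge\frac14$; this set increases to $D'$ as $N_0\to\infty$. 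Since $\lambda\le\lambda^u-\Delta$, the averages of $a_j$ are also at least $2\sigma$ on $D''$. Then for $m\ge N_0$ we get
$$
|\mu^c_m|\ge\tfrac14\cdot\tfrac{\sigma}{A-\sigma}.
$$
The delicate point is that $\sigma$ and $\beta$ were fixed before this lemma. So I must check that choosing $\sigma$ this way (and hence $\beta=\sigma/L^2$, with $L$ independent of $\sigma$ or only improving as $\sigma$ decreases) is consistent, so that $\beta\le\frac{\sigma}{2(A-\sigma)}$ holds. If needed, I would shrink $\beta$ further, which only weakens the requirement $q_n\le\frac12\beta n$.
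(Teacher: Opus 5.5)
Your proof is correct and has the same skeleton as the paper's: you reduce to a lower bound on $|\mu^c_m|$ via $|\mu^0_m|\ge|\mu^c_m|-\beta/2$, then use $D_j\subseteq\tilde D_j$. The difference is in how you justify the key estimate.

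The paper simply asserts $\frac1m\sum_{j=0}^{m-1}\mu_D(D_j)\ge\sigma$ for every $m$, ``using the fact that the sequence of effective hyperbolic times has positive lower density''. It then runs the mass computation through the distortion bound \eqref{eq:3412}, picking up a factor $1/L^2$, whence $\beta=\sigma/L^2$ and $\beta_0=\beta/2$. Positive lower density is a pointwise, non-uniform statement and is not tied to the constant $\sigma$. On its own it yields, via Fatou's lemma, only $\liminf_m\frac1m\sum_{j<m}\mu_D(D_j)\ge\int_D\underline{d}(x)\,d\mu_D(x)$, where $\underline{d}(x)$ is the lower density of hyperbolic times at $x$.

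Your Pliss-lemma-plus-Egorov argument supplies exactly this uniformity, with explicit constants. Reading $|\mu^c_m|=\frac1m\sum_j\mu_D(\tilde D_j)$ directly off the paper's formula for $\mu^c_m$ also makes the $L^2$ factor unnecessary. The cost is twofold, and neither part is a real problem:
\begin{itemize}
\item $\sigma$ must be fixed at the outset in terms of $D$. This is harmless, since the choice depends only on $f$ and $D$.
\item Your bound holds only for $m\ge N_0$. This is all that is used later, since only limits along $m_\ell\to\infty$ enter the proof of the theorem.
\end{itemize}

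A few minor points. Your worry about compatibility with $\beta=\sigma/L^2$ is unnecessary, and your guess about $L$ is backwards: the distortion constant gets worse, not better, as $\sigma$ decreases. Nothing hinges on this, however. Simply define $\beta$ to be your lower bound and choose $q_n$ afterwards, since any $\beta>0$ allows $q_n\to\infty$ with $q_n\le\frac12\beta n$. Also, with $\mu_D(D'')\ge\frac14$ the bound you obtain is $\frac{\sigma}{4(A-\sigma)}$, not $\frac{\sigma}{2(A-\sigma)}$. Finally, the Pliss times lie in $\{1,\dots,m\}$, so restricting to $0\le j\le m-1$ may cost you one of them. Neither of these affects the conclusion.
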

\begin{proof}[Proof]
Using the fact that the sequence of effective hyperbolic times has positive lower density, we obtain 
$$
\frac{1}{m}\sum_{j=0}^{m-1}\mu_D(D_j)\geq\sigma.
$$
Using this and \eqref{eq:3412}, we obtain that 
$$
\begin{aligned}
\Phi^*(\nu^c_m)&(M)=\\
&\frac{1}{m_W(D)}\frac1m\sum_{j=0}^{m-1}
\sum_{i=1}^p\sum_{x\in A^j_i}\frac{1}{\text{Jac}_{f^j}(x)}\int_{B(f^j(x),r)} q^x_j(y)dm_{B(f^j(x),r)}(y)\\
&\ge\frac{1}{m_W(D)}\frac1m\sum_{j=0}^{m-1}
\int_W\frac{\text{Jac}_{f^n}(z)}{\text{Jac}_{f^n}(x)}\frac{1}{Lp_j(z)}\\
&\times\sum_{i=1}^p 1_{\bigcup_{x\in A^n_i}B_j(x,r)}(z))1_D(z)dm_W(z)\geq\\
&\frac1m\sum_{j=0}^{m-1}\frac{1}{L^2}\mu_D(\tilde{D_j})\geq \frac1m\sum_{j=0}^{m-1}\frac{1}{L^2}\mu_D(D_j)\geq \frac{1}{L^2}\sigma:=\beta. 
\end{aligned} 
$$
Therefore, 
$$
\mu^0_m(M)=\Phi^*(\nu_m)(M)\geq\Phi^*(\nu^c_m)(M)-\frac{\beta}{2}\geq\frac{\beta}{2}:=\beta_0.
$$
The desired result follows.
\end{proof}
Using Lemma \ref{l1}, we can find a sequence $m_\ell\to\infty$ such that 
$$
\nu_{m_\ell}\to\nu, \quad \mu_{m_\ell}\to\mu, \quad
\mu^0_{m_\ell}\to\mu^0.
$$
It follows from the construction that $\nu\in\mathcal{M}(\mathcal{R}_{K,L})$ and $\mu^0\in\mathcal{M}(M)$ satisfying $|\mu^0|\geq\beta_0$. 
By Lemma \ref{l2}, we have $\mu^0=\Phi^*(\nu)$.

It follows that the measure 
$\mu^0\in\Phi^*\mathcal{M}(R_{K,L})$, 
$|\mu^0|\geq\beta_0$ and $\mu^0<\mu$ implying that  some ergodic component of $\mu$ is an ergodic SRB measure. This means that the sequence of measures \eqref{eq:111} has a limit which is an ergodic SRB measure completing the proof of the theorem.

\section{Applications}

In this section we apply our main theorem \ref{main-thm} to two particular situations: Anosov diffeomorphisms and diffeomorphisms with dominated splittings and we show that starting with any Borel subset $D$ in a local unstable manifold (or a local $u$-admissible manifold) of positive leaf volume, any limit measure of the evolution of the restriction of the leaf volume to $D$ is an SRB measure. 

\subsection{Anosov diffeomorphisms} Let $f$ be a $C^{1+\alpha}$ topologically transitive Anosov diffeomorphism of a compact smooth Riemannian manifold $M$. Let also $W_0$ be an unstable manifold of finite size. Choose a subset $D$ of $W_0$ of positive leaf volume that is $m_{W_0}(D)>0$. Let $\mu$ be the leaf volume conditioned on $D$, that is, $\mu(E)=\frac{m_{W_0}(E\cap D)}{m_{W_0}(A)}$ for every measurable set $E$.
\begin{thm}
The sequence of measures \eqref{eq:111} converges 
to the unique SRB measure for $f$.
\end{thm}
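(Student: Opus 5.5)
We will deduce the statement from Theorem \ref{main-thm}, combined with the uniqueness of the SRB measure for a transitive Anosov diffeomorphism and a structural fact about limit measures. First we check the hypotheses. For an Anosov diffeomorphism take $L=M$ and the cone families $K^u$, $K^s$ of a fixed small angle around the continuous invariant splitting $E^u\oplus E^s$. Then $\lambda^u\ge\lambda_0>0$ and $\lambda^s\le-\lambda_0$ uniformly; after passing to an adapted metric or an iterate, $\Delta\equiv0$ and $\lambda(x)\ge\lambda_0$. The angle $\theta(x)$ is uniformly bounded away from zero. Hence every point is effectively hyperbolic, so $\mathcal{E}=M$. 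The unstable manifold $W_0$ of finite size is tangent to $E^u\subset K^u$ everywhere. Covering $W_0$ by finitely many $u$-admissible pieces of size $r$, at least one piece $W$ meets $D$ in positive leaf volume. Moreover $\mu$ is a convex combination of the normalized restrictions $\mu_{D\cap W}$, and it suffices to treat each such piece, since convex combinations of sequences converging to the same limit converge to that limit.

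Second, we use compactness. The sequence $\mu_n$ in \eqref{eq:111} lies in the weak$^*$ compact set of probability measures on $M$. Since $f$ is a transitive $C^{1+\alpha}$ Anosov diffeomorphism, it has a unique SRB measure $\mu_{SRB}$. It therefore suffices to show that \emph{every} limit measure $\mu'$ of $\mu_n$ equals $\mu_{SRB}$; convergence of the whole sequence then follows.

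Third, we identify the limit, which requires more than Theorem \ref{main-thm}, since that result only yields one SRB ergodic component. We would prove that every limit $\mu'$ has absolutely continuous conditional measures on unstable leaves. In the Anosov case every time $n$ is effectively hyperbolic, with uniform constants, so $D_n=D$ for all $n$. The argument of the main proof then applies with $q_n$ chosen so that $q_n/n\to0$ (allowed since $\beta$ is now replaceable by any small number). We obtain $\mu^c_m=\mu_m$ exactly, and $|\mu_m-\mu^0_m|\le q_m/m\to0$. Hence $\mu'=\mu^0=\Phi^*(\nu)$ with $\nu$ supported on $\mathcal R_{K,L}$, i.e.\ on generalized standard pairs whose manifolds are genuine local unstable manifolds and whose densities are $L^\infty$. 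Thus $\mu'$ is a weak$^*$ average of measures absolutely continuous with respect to leaf volume on unstable leaves. By the standard argument of \cite{CDP} and \cite{PS}, $\mu'$ is then a $u$-measure, and for Anosov maps it is an SRB measure. Every ergodic component of $\mu'$ is SRB, hence equals $\mu_{SRB}$ by uniqueness, so $\mu'=\mu_{SRB}$.

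The main obstacle is this third step. We must pass from ``$\mu'$ is a combination of pushed-forward leaf-absolutely-continuous pieces'' to ``$\mu'$ has absolutely continuous unstable conditionals''. The densities $q^x_j$ are only $L^\infty$, so we cannot use H\"older-continuous densities converging to $\operatorname{Jac}$-ratios. What we would try first is to exploit the upper bound $q^x_j\le\tilde L$. This gives that $\mu'$, restricted to a foliated box of local unstable leaves, is dominated by $\tilde L$ times an average of leaf volumes. A measure dominated in this way has conditionals absolutely continuous with respect to leaf volume, and for an invariant measure this suffices to conclude that it is SRB. This avoids any regularity of the limiting densities.
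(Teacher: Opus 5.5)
Your proposal is correct and follows the paper's own route: in the Anosov case every time is effectively hyperbolic, so $D_n=D\subseteq\tilde D_n$, the standard-pair part $\mu^0=\Phi^*(\nu)$ of the construction exhausts each limit measure, and every limit measure is therefore SRB and equal to the unique SRB measure. Your refinements are worthwhile rather than a change of method. The paper's claim $\mu^0_n=\mu_n$ holds only up to the discarded terms $j<q_n$, whose total mass is $q_n/n$, so your choice $q_n\to\infty$ with $q_n/n\to0$ is what actually gives $\mu^0=\mu$. Your explicit appeal to compactness and uniqueness also supplies the convergence of the whole sequence, which the paper leaves implicit.
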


\textbf{Proof} In the Anosov case, the existence of stable and unstable cones $K^s(x)$ and $K^u(x)$ satisfying conditions (1) and (2) from the definition of the cone families is well known. Moreover, $\Delta\equiv0$ and so the conditions (1) and (2) from the definition of the effective hyperbolicity are satisfied at every point due to uniform hyperbolicity. Consequently, the Anosov diffeomorphisms are  effectively hyperbolic with $\mathcal{E}=M$ and each time in each point is effectively hyperbolic, that is $D_n=D$, so we can apply the main theorem. Moreover, from $D_n=D$ for all $n$ we have $D\subseteq \tilde{D}_n$ (Besicovitch cover  at any time $n$ covers the whole effective set $D$). So, $\mu^0_n=\mu_n$ for all $n$. Consequently, $\mu^0=\mu$ --- the SRB ergodic component coincides with the measure itself, so the measure $\mu$ is SRB.

\subsection{Diffeomorphisms with a dominated splitting and  positive Lyapunov exponents in the center-stable direction} 

Let $f: M\to M$ be a $C^1$ diffeomorphism on a manifold $M$. A compact set $K\subset M$ is said to admit a dominated splitting for $f$ if it is forward invariant, i.e. $f(K)\subset K$, and there exists a continuous $Df$-invariant splitting 
$T_KM =E\oplus F$ of the tangent bundle restricted to $K$ and a constant $\lambda<1$ satisfying for an appropriate choice of  Riemannian metric on $M$
\begin{enumerate}
\item $E_x$ is uniformly contracting that is for all $x\in K$,
$$
\|df|E_x\|\le\lambda;
$$
\item $E$ is dominated by $F$ that is for all $x\in K$,
$$
\|df|E_x\|\|df^{-1}|F_{f(x)}\|\le\lambda.
$$ 
\end{enumerate}

We call the subbundle $E$ \emph{strong-stable} and the subbundle $F$ \emph{centre-unstable}.

It is shown in \cite{ABV} that if the set $K$ admits dominated splitting then it also admits strong-stable and center-unstable cone families $K^s$ and $K^u$ satisfying (1) and (2).

We say that an embedded $C^1$ submanifold $N$ is tangent to the centre
unstable cone field $K^u$ if the tangent subspace to $N$ at each point $x\in N$ is contained in the corresponding cone $K^u(x)$.

\begin{definition}
We say that a $C^1$ diffeomorphism $f$ with a set $K$ which admits dominated splitting is non-uniformly expanding along the center-unstable direction if 
\begin{equation} \label{eq:666}
\limsup_{n\to+\infty}\frac{1}{n}\sum_{j=1}^{n}\log df^{-1}|F_{f^j(x)}< 0 
\end{equation}
for all $x$ in a positive Lebesgue measure subset $H\subseteq K$.
\end{definition}
\begin{thm}
 Let $f:M\to M$ be a $C^2$ diffeomorphism having a  set $K$ that admits dominated splitting. Assume that $f$ is non-uniformly expanding along the center-unstable direction. Let $D_0$ be any $C^2$ disk  tangent to the central stable direction cone field and let $D$ be any subset of $D_0$ of the positive $D_0$-volume. Let $\mu_n$ be the sequence of measures defined by \eqref{eq:111}. Then every limit measure $\mu$ of the sequence $\mu_n$ has an ergodic component that is an SRB measure.
\end{thm}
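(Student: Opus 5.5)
The plan is to reduce the statement to Theorem \ref{main-thm}. To do so, I will check that on the set $H$ (or rather on the forward invariant set $L=\bigcup_{k\ge 0}f^k(H)\subset K$) the map $f$ is effectively hyperbolic with respect to the cone families $K^s$, $K^u$ provided by \cite{ABV}, and that the disk $D_0$ is a $u$-admissible manifold carrying the subset $D$ as in the hypotheses of that theorem. I read the statement as saying that $D_0$ is tangent to the centre-unstable cone field $K^u$, since only this makes $D_0$ $u$-admissible. By \cite{ABV} the cones are invariant and transversal, so conditions (1) and (2) of Section 2.1 hold on $K$.

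First I will estimate the quantities in \eqref{eqn:luls}. Since the splitting $E\oplus F$ is continuous and $Df$-invariant, I can take the cones thin enough. Then for $v\in K^u(x)$ with $\|v\|=1$ we have $\log\|Df v\|\ge -\log\|df^{-1}|F_{f(x)}\|-\varepsilon$, and $\lambda^s(x)\le\log\lambda+\varepsilon<0$. By domination, $\lambda^s(x)-\lambda^u(x)\le\log\lambda+2\varepsilon<0$, hence $\Delta\equiv 0$. Therefore $\lambda(x)=\min(\lambda^u(x),-\lambda^s(x))$. The Birkhoff averages of $-\lambda^s$ are at least $-\log\lambda-\varepsilon>0$, and by \eqref{eq:666} the averages of $\lambda^u$ along the orbit of $x\in H$ have positive $\liminf$, up to an $\varepsilon$ which I choose smaller than half the rate in \eqref{eq:666}. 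One caveat: \eqref{eq:666} gives a pointwise but non-uniform rate. I will restrict to a positive-measure subset $H_c\subset H$ on which the $\limsup$ is $\le -c$, and pick $\varepsilon<c/2$. Condition (2) of effective hyperbolicity is immediate, because the angle $\theta(x)$ between continuous transversal cones on the compact set $K$ is bounded away from zero.

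Next I will verify the hypothesis on $D_0$ and $D$. By Fubini-type considerations (absolute continuity of a smooth foliation by disks tangent to $K^u$ near a density point of $H_c$), the sets of positive Lebesgue measure meet $D_0$ in positive $m_{D_0}$ measure. This is the main obstacle. The theorem as stated allows an arbitrary $D_0$, whereas Theorem \ref{main-thm} needs $D\subset\mathcal E\cap W$. For this step I would use the standard fact from \cite{ABV}: non-uniform expansion along $F$ at Lebesgue-a.e. point of $H$ gives hyperbolic times, and such times persist along $cu$-disks. Concretely, I would reduce to the case $m_{D_0}(D\cap\mathcal E)>0$, replacing $D$ by $D\cap\mathcal E$. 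The natural route is to interpret the hypothesis as holding for $m_{D_0}$-a.e. point of $D$, which is what \cite{ABV} use for disks. Then $D\cap\mathcal E$ has full $\mu_D$-measure and $\mu_D=\mu_{D\cap\mathcal E}$.

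Finally, I will shrink $D_0$ to a small disk $W$ of size $r$ around a point of $\mathcal E\cap D$, written as a graph of a $C^2$ (hence $C^{1+\alpha}$) function $\psi$ over $E^u$. The measure $\mu_D$ is a convex combination of the restrictions to finitely many such disks, and push-forwards respect convex combinations. Any limit $\mu$ of the $\mu_n$ is then a convex combination of limits for the pieces, after passing to a common subsequence. Applying Theorem \ref{main-thm} to one piece with positive weight gives a measure $\mu^0\le\mu$ in $\Phi^*\mathcal M(\mathcal R_{K,L})$ of positive mass. As in the end of the proof of Theorem \ref{main-thm}, this yields an ergodic component of $\mu$ that is an SRB measure.
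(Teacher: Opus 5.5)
Your overall plan matches the paper's proof, which consists only of quoting \cite{CDP} for effective hyperbolicity of $f$ with respect to the cones of \cite{ABV} and then applying Theorem \ref{main-thm}. Your bookkeeping is fine: reducing to $m_{D_0}(D\cap\mathcal E)>0$, cutting $D_0$ into finitely many small $u$-admissible pieces, and using linearity of push-forwards. The paper's one-line proof tacitly needs the same reduction. Be aware, though, that this is an added hypothesis. Your ``Fubini-type'' sentence is false for a fixed disk, since a set of positive volume can meet a given disk in a null set.

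The genuine gap is where you replace the citation of \cite{CDP} by a direct check of condition (1) of effective hyperbolicity. Write $\lambda<1$ for the domination constant and $\lambda(\cdot)$ for the function in \eqref{eqn:le}. You show separately that the Birkhoff averages of $\lambda^u$ and of $-\lambda^s$ have positive $\liminf$. But $\lambda(\cdot)=\min(\lambda^u,-\lambda^s)$ is a pointwise minimum, and $\frac1n\sum_k\min(a_k,b_k)\le\min\bigl(\frac1n\sum_k a_k,\frac1n\sum_k b_k\bigr)$, so the inequality runs the wrong way. Your estimates only yield $\lambda(x)\ge\min\bigl(\lambda^u(x),|\log\lambda|-\varepsilon\bigr)$, and averages of this truncated function can be negative.

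Here is a concrete orbit showing this. Along it, let $F$ be alternately contracted by $e^{-A}$ and expanded by $e^{A+2c}$, with $A>|\log\lambda|$. Let $E$ be contracted by $\lambda e^{-A}$ at the first kind of step and only by $\lambda$ at the second; this is compatible with both uniform contraction of $E$ and domination. Then \eqref{eq:666} holds with rate $c$, and the averages of $-\lambda^s$ are at least $|\log\lambda|-\varepsilon$. Yet $\lambda(\cdot)$ is about $-A$ at the first steps and about $|\log\lambda|$ at the second, so its averages are roughly $(|\log\lambda|-A)/2<0$. Thus condition (1) does not follow from your two estimates. This step needs the input from \cite{CDP} that the paper relies on, or a further argument that actually controls the pointwise minimum, which the two separate bounds cannot do.
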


\begin{proof} It is shown in \cite{CDP} that under the conditions of the theorem, the map is effectively hyperbolic on the set $K$ with respect to the cone families $K^s$ and $K^u$ defined above, so the result follows from Theorem \ref{main-thm}.
\end{proof}

\end{document}